\DeclareFontFamily{U}{tipa}{}
\DeclareFontShape{U}{tipa}{m}{n}{<->tipa10}{}
\newcommand{\ark@char}{{\usefont{U}{tipa}{m}{n}\symbol{62}}}%
\newcommand{\ark}[1]{\mathpalette\ark@arc{$#1$}}
\newcommand{\ark@arc}[2]{%
	\sbox0{$\m@th#1#2$}%
	\vbox{
		\hbox{\resizebox{\wd0}{\height}{\ark@char}}
		\nointerlineskip
		\box0
	}%
}
\newtheorem {theorem} {Theorem} 
\newtheorem {definition} {Definition}
\begin{document}
\pretolerance10000

\begin{frontmatter}
	
\title{Existence of weak solutions for a degenerate Goursat type linear problem}

\author[1]{Olimpio Hiroshi Miyagaki}
\ead{olimpio@ufscar.br}
\author[2]{Carlos Alberto Reyes Peña\corref{cor1}}
\ead{carlos.reyes@estudante.ufscar.br}
\author[3]{Rodrigo da Silva Rodrigues}
\ead{rodrigorodrigues@ufscar.br}

\cortext[cor1]{Corresponding author}

\address[1]{olimpio@ufscar.br}
\address[2]{carlos.reyes@estudante.ufscar.br}
\address[3]{rodrigorodrigues@ufscar.br}




\begin{abstract}

For a generalization of the Gellerstedt operator with mixed-type Dirichlet boundary conditions to a suitable Tricomi domain, we prove the existence and uniqueness of weak solutions of the linear problem and for a generalization of this problem. The classical method introduced by Didenko, which study the energy integral argument, will be used to prove estimates for a specific Tricomi domain. 
\end{abstract}
\begin{keyword}
Mixed-type partial equations, Gellerstedt operator, weighted Sobolev space, the auxiliary Cauchy problem, existence and uniqueness of solutions.
\MSC[2020] 35M12 \sep 46E35 \sep 35B33 \sep 35A01 

\end{keyword}
\end{frontmatter}

\linenumbers

\section{Introduction}

Consider the following problem involving a  Gellerstedt type operator with mixed-type Dirichlet
boundary conditions
\begin{equation}\label{LPL}
\begin{cases}
  \mathcal{O}(u)-\lambda u:=-y^{m_1}u_{xx}-x^{m_2}u_{yy}-\lambda u=f(x,y)  &\mbox{in}\quad \Omega, \\
\quad \quad \quad \quad u=\gamma &\mbox{on}\quad AC\cup\sigma\subseteq\partial\Omega,
\end{cases}
\end{equation}
    where $m_1,m_2\in \mathbb{N},$ $f\in L^2(\Omega)$ and $\Omega\subset\mathbb{R}^2$ is a Tricomi domain for the operator $\mathcal{O},$ that is, $\Omega$ is open, bounded, simply connected set with piecewise $C^1$ boundary $\partial\Omega=\sigma\cup AC\cup BC$ formed by the curve $\sigma\subset\{(x,y)\in\mathbb{R}^2; \ y>0\}$ is a piecewise $C^2$ simple arc join the points $A=(-x_1,0)$ and $B=(x_0,0)$ with $0<x_0<x_1$ in $\mathbb{R}$ and the characteristics curves $AC$ and $BC$ in $\{(x,y)\in\mathbb{R}^2; \ -x_1\leq x\leq x_0 \ \mbox{and} \ y\leq0\}$ of $\mathcal{O}$ passing through points $A$ and $B$ respectively and meet at point $C$. See figure \ref{Sorv4}.

\begin{figure}[!ht]
    \centering
    \includegraphics[width=7.1cm,angle=-90]{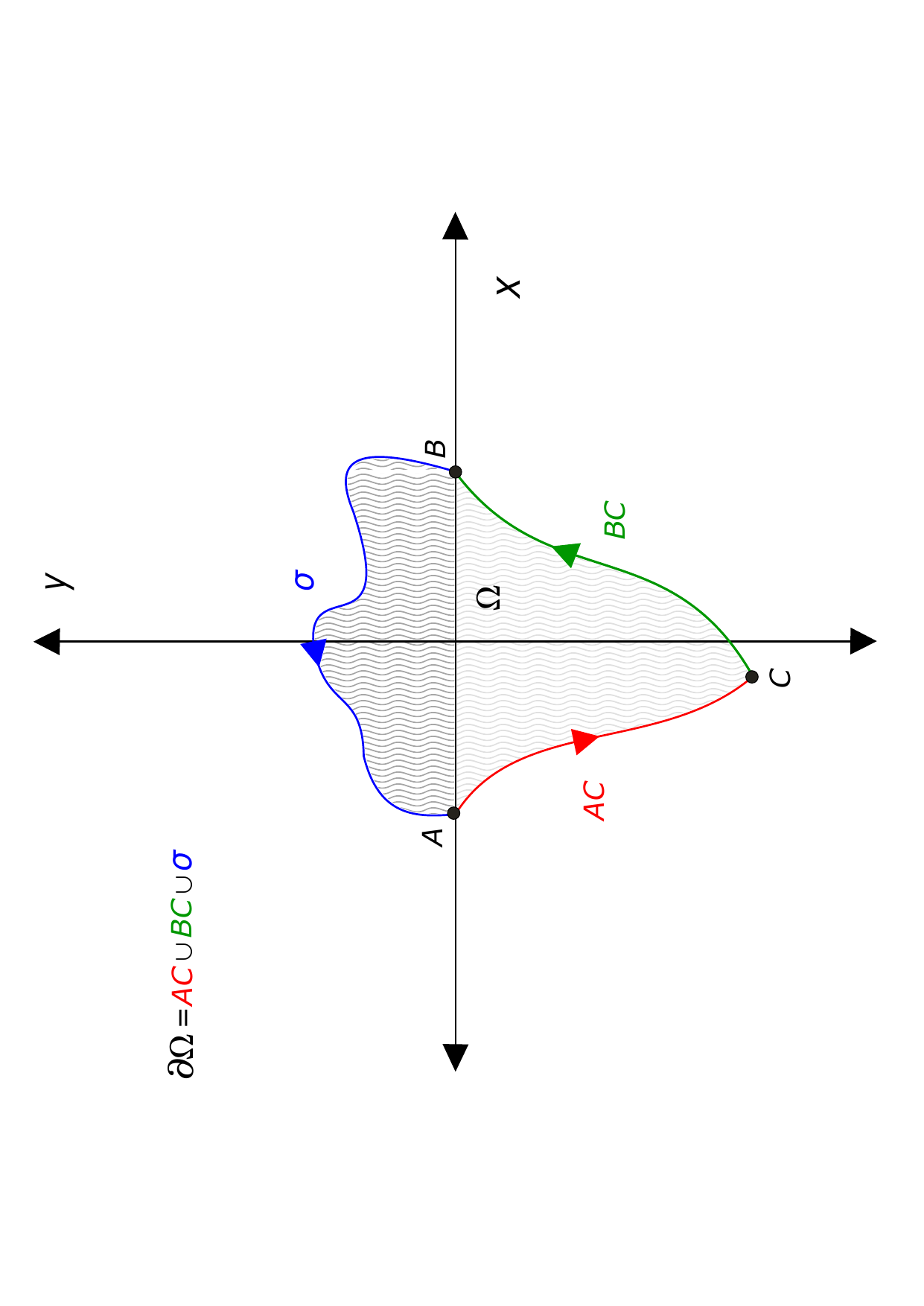}
    \caption{Domain $\Omega$ for the problem \eqref{LPL}.}
    \label{Sorv4}
\end{figure}

\vspace{0.3cm}

As developed in \cite{olirodcar}, the characteristic curves $AC$ and $BC$ in $\{(x,y)\in\mathbb{R}^2; \ y\leq0\}$ of the operator $\mathcal{O}$, for $m_1\in\mathbb{N}$ odd and $\frac{m_2}{2}\in\mathbb{N}$ even, are given by

\begin{align}\label{AC1} 
AC=\bigg\{(x,y)\in\mathbb{R}^2;& \ y_c\leq y\leq0, \nonumber\\
&\frac{2}{m_1+2}(-y)^\frac{m_1+2}{2}=\frac{2}{m_2+2}\left[x^\frac{m_2+2}{2}+x_1^\frac{m_2+2}{2}\right] \bigg\},
\end{align}

\begin{align}\label{BC1}
BC=\bigg\{ (x,y)\in\mathbb{R}^2;& \ y_c\leq y\leq0, \nonumber\\
&\frac{2}{m_1+2}(-y)^\frac{m_1+2}{2}=-\frac{2}{m_2+2}\left[x^\frac{m_2+2}{2}-x_0^\frac{m_2+2}{2}\right] \bigg\},
\end{align}
and the point $C=(x_c, y_c)$ where 

$$x_c=-\left[\frac{1}{2}\left(x_1^\frac{m_2+2}{2}-x_0^\frac{m_2+2}{2}\right)\right]^\frac{2}{m_2+2}$$
and
$$y_c=-\left[\frac{1}{2}\left(\frac{m_1+2}{m_2+2}\right)(x_1^\frac{m_2+2}{2}-x_0^\frac{m_2+2}{2})\right]^\frac{2}{m_1+2}\cdot$$

Parameterizing \eqref{AC1} and \eqref{BC1} with respect to the variable $y$, the exterior normal vector will be given by

\begin{align*}
\eta_{AC}=\left(-1,-\left[\frac{m_2+2}{m_1+2}(-y)^\frac{m_1+2}{2}-x_1^\frac{m_2+2}{2}\right]^\frac{-m_2}{m_2+2}(-y)^\frac{m_1}{2}\right),
\end{align*}
and
\begin{align*}
\eta_{BC}=\left(1,-\left[-\frac{m_2+2}{m_1+2}(-y)^\frac{m_1+2}{2}+x_0^\frac{m_2+2}{2}\right]^\frac{-m_2}{m_2+2}(-y)^\frac{m_1}{2}\right),
\end{align*}
respectively. 

The importance in the study  problem \eqref{LPL}  is due to the operator $\mathcal{O}=-y^{m_1}\partial_{xx}-x^{m_2}\partial_{yy}$ is a generalization of the operators Laplacian operator  $\Delta=\partial_{xx}+\partial_{yy}$, Tricomi operator $T=y\partial_{xx}+\partial_{yy}$ see \cite{lupopayne, Moiseev, Sabitova}, and Gellerstedt operator $L=y^{m_1}\partial_{xx}+\partial_{yy}$,  see\cite{lupopayne, Moiseev,Sabitova},  for that reason the existence results of weak solutions to the problem \eqref{LPL}, were motivated by the classical result involving the Laplacian operator in \cite{giltru} and  the linear problem involving the Tricomi operator in \cite{didenko}.  Nonexistence result of nontrivial regular solutions for the semilinear problem involving the operator $\mathcal{O}$ in \cite{olirodcar} plays an important role in our study, since some arguments made on the Tricomi domains as well as in the weighted Sobolev spaces associated with the operator were borrowed from there.
Briefly we are going  make a review of the problem. In \cite{Otway} Otway  treated the Keldysh type operator getting existence of distribution solution also some maximum principle for non nuniformly elliptic operator.  Sabitova in \cite{Sabitova} studied a eigenvalue problem involving Chaplygin type operator
$$(sign \, y) |y|^m u_{xx} +u_{yy}-\lambda |y|^m u=0$$ where uniqueness and non existence results for $\lambda$ in some range.  In \cite{Favaron,Moiseev} also eigenfunctions problem is treated.  In \cite{Paronetto}  existence for elliptic-hyperbolic Tricomi problem is established, while in \cite{Chen} is obtained estimates for Dirichlet eigenvalue for degenerate $\Delta_{\mu}$ Laplace operator. 

\vspace{0.3cm}
All previously mentioned operators, except for the Laplacian, share a common feature in terms of a weight. However, our operator involves two weights. This additional weight brings others difficulties. In fact, the gradient and the norm are always related by the divergence and this plays a crucial role in the study of PDEs. As mentioned previously in \cite{olirodcar} and not mentioned in the other works cited here, we note that, $\mathcal{O}u=div(\mathcal{X}u)\neq div(-\nabla_{m_1,m_2} u)$ and
$$||u||_{W^1_{AC\cup\sigma}}^2=||\nabla_{m_1,m_2} u||_{L^2(\Omega)}^2+||u||_{L^2(\Omega)}^2
$$ where 
$$\nabla_{m_1,m_2}u=(|y|^{\frac{m_1}{2}}u_x,|x|^{\frac{m_2}{2}}u_y) \mbox{ and } \mathcal{X}u=(-y^{m_1}u_x,-x^{m_2}u_y)$$ and the particular case, $m_1=m_2=0,$ we have $$\nabla_{m_1,m_2} u=\nabla u,\,  \mathcal{X}u=(-u_x,-u_y)=-\nabla u,\, \Delta u=div(\nabla u),$$ and $||u||_{H^1(\Omega)}^2=||\nabla u||_{L^2(\Omega)}^2+||u||_{L^2(\Omega)}^2$. Along with this, we add the non-zero lambda in \eqref{LPL} with a view to studying the spectral theory of this operator in the future.
\vspace{0.3cm}

Following Didenko's ideas in \cite{didenko}, we will study the particular linear problem where $\lambda=\gamma=0$ in \eqref{LPL} on the same domain $\Omega$.
\begin{equation}\label{LP}
\begin{cases}
  \mathcal{O}(u):=-y^{m_1}u_{xx}-x^{m_2}u_{yy}=f(x,y)   &\mbox{in}\quad \Omega, \\
\quad\quad u=0 & \mbox{on}\quad AC\cup\sigma\subseteq\partial\Omega.
\end{cases}
\end{equation} and consider also on the same domain $\Omega$ the adjoint problem to the semilinear problem \eqref{LP}
\begin{equation}\label{ALP}
\begin{cases}
  \mathcal{O}(v):=-y^{m_1}v_{xx}-x^{m_2}v_{yy}=f(x,y)  &\mbox{in}\quad \Omega, \\
\quad\quad v=0 &\mbox{on}\quad BC\cup\sigma\subseteq\partial\Omega.
\end{cases}
\end{equation}

Throughout the development of this paper, as is inevitable, we found several difficulties, mainly with respect to domain $\Omega$ which we impose some modifications to get suitable estimates. No less important and not mentioned in previous works to ours, as for the Gellerstedt operator as well for the Tricomi operators, is the interaction between the admissibility of the domain for the operator $\mathcal{O}$ and consequently for the operator $\mathcal{O}-\lambda$ with $\lambda\leq \frac{1}{K_3},$ where $K_3$ is a suitable constant. More specifically we prove:

\begin{theorem}
    Let $\Omega\subset\mathbb{R}^2$ be a admissible Tricomi domain for the operator $\mathcal{O}-\lambda$ as defined in the problem \eqref{LPL}, $f(x,y)\in L^2(\Omega)$, $\lambda\in\mathbb{R}$ and 
 $\lambda\leq0$. The problem 
    \begin{equation}\label{CREYS_1}
           \begin{cases}
         \mathcal{O}(u)-\lambda u=f(x,y)  &\mbox{in}\quad \Omega, \\
 \quad\quad\quad\quad u=\gamma &\mbox{on}\quad AC\cup\sigma\subseteq\partial\Omega, 
    \end{cases}
    \end{equation}
 admits an unique weak solution and the solution operator 
    \begin{align*}
        S^{\lambda,\gamma}_{AC\cup\sigma}: L^2(\Omega)\rightarrow  W^1_{AC\cup\sigma}
    \end{align*}
    which assigns to $f\in L^2(\Omega)$ the unique weak solution $u\in W^1_{AC\cup\sigma}$ of the problem \eqref{CREYS_1} is linear and continuous. 
\end{theorem}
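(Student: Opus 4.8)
The plan is to reduce the inhomogeneous boundary data to the homogeneous case, then obtain the weak solution of \eqref{LP}-type problems via a duality/Galerkin argument built on an a priori energy estimate. First I would handle the boundary data: assuming $\gamma$ is the trace of some $G\in W^1_{AC\cup\sigma}$ (the natural admissibility hypothesis), set $w=u-G$, so that $w$ solves $\mathcal{O}(w)-\lambda w = f - \mathcal{O}(G)+\lambda G =: \tilde f$ with $w=0$ on $AC\cup\sigma$; one must check that $\tilde f$ lies in the correct dual space so the reduction is legitimate, and that linearity and continuity of $f\mapsto u$ follow from those of $\tilde f\mapsto w$ once $G$ is fixed. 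Thus it suffices to treat \eqref{LP} with $\lambda\le 0$ added, i.e. $\mathcal{O}(w)-\lambda w=\tilde f$, $w=0$ on $AC\cup\sigma$.

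The core is the a priori estimate. For smooth $v$ vanishing on $BC\cup\sigma$ (the adjoint boundary, cf. \eqref{ALP}), I would form $\langle \mathcal{O}(v)-\lambda v, \mathcal{M}v\rangle$ for Didenko's multiplier $\mathcal{M}v$ — typically of the form $a(x,y)v_x+b(x,y)v_y+c(x,y)v$ with coefficients adapted to the characteristics \eqref{AC1}, \eqref{BC1} — integrate by parts over $\Omega$, and organize the result into a positive bulk term controlling $\|\nabla_{m_1,m_2}v\|_{L^2(\Omega)}^2+\|v\|_{L^2(\Omega)}^2$ plus boundary integrals over $\sigma$, $AC$, $BC$. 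The admissibility condition on $\Omega$ (convexity/monotonicity of $\sigma$ relative to the multiplier vector field, and the sign of the normal components $\eta_{AC},\eta_{BC}$ computed above) is exactly what forces the boundary terms to have a favorable sign; the term $-\lambda v$ contributes $-\lambda\int (\text{something})v^2$, which is why the sign restriction $\lambda\le 0$ is imposed — it only helps. This yields
\begin{equation*}
\|v\|_{W^1_{BC\cup\sigma}} \le K_3\,\|\mathcal{O}(v)-\lambda v\|_{L^2(\Omega)}
\end{equation*}
for all admissible $v$, and symmetrically the same estimate with $AC$ in place of $BC$ for the direct operator.

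With the estimate in hand, existence is a standard functional-analytic step: the map $v\mapsto \mathcal{O}(v)-\lambda v$ from the test space (smooth functions vanishing on $BC\cup\sigma$) into $L^2(\Omega)$ is injective with closed range by the estimate, so on its range the inverse is bounded; the functional $\ell(\mathcal{O}(v)-\lambda v):=\int_\Omega \tilde f\, v$ is then well-defined and bounded there, and by Hahn–Banach plus the Riesz representation in $L^2(\Omega)$ there is $w\in L^2(\Omega)$ with $\int_\Omega w\,(\mathcal{O}(v)-\lambda v)=\int_\Omega \tilde f\, v$ for all such $v$ — this $w$ is by definition the weak solution of \eqref{CREYS_1} (after adding back $G$). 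Uniqueness follows from the a priori estimate applied to the direct problem: a weak solution with $\tilde f=0$ must vanish. Boundedness of $S^{\lambda,\gamma}_{AC\cup\sigma}$ on $L^2(\Omega)$ comes from the same estimate, giving $\|w\|_{W^1_{AC\cup\sigma}}\le K_3\|\tilde f\|_{L^2(\Omega)}\le C(\|f\|_{L^2(\Omega)}+\|G\|_{W^1_{AC\cup\sigma}})$, and linearity is immediate from uniqueness.

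I expect the main obstacle to be the a priori estimate on the specific domain $\Omega$: choosing Didenko's multiplier $\mathcal{M}v$ so that, simultaneously, (i) the interior quadratic form is coercive in the $\nabla_{m_1,m_2}$-norm despite the two distinct degeneracy weights $y^{m_1}$ and $x^{m_2}$, and (ii) every boundary contribution over $\sigma$, $AC$, $BC$ is nonnegative under the admissibility hypothesis — with the parity constraints ($m_1$ odd, $m_2/2$ even) entering through the explicit forms of $\eta_{AC},\eta_{BC}$. Handling the corner points $A$, $B$, $C$ and the piecewise-$C^1$ nature of $\partial\Omega$ rigorously (approximating $\Omega$ by smooth subdomains, or working with a regularized operator $\mathcal{O}_\varepsilon$ and passing to the limit) is the technical heart; everything after the estimate is routine.
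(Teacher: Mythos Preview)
Your overall architecture (reduce to homogeneous boundary data, then use a duality argument built on an a priori estimate) matches the paper's, but the functional-analytic implementation has a genuine gap and your estimate is not the one the paper uses.

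First, the hypothesis already hands you the estimate. The theorem assumes $\Omega$ is \emph{admissible} for $\mathcal{O}-\lambda$, which by Definition~1 means exactly that
\[
\|u\|_{L^2(\Omega)}\le C_3\|(\mathcal{O}_{AC}-\lambda)u\|_{W^{-1}_{BC\cup\sigma}},\qquad
\|v\|_{L^2(\Omega)}\le C_4\|(\mathcal{O}_{BC}-\lambda)v\|_{W^{-1}_{AC\cup\sigma}}
\]
already hold. The Didenko multiplier computation you sketch is the content of the paper's Theorem~2, not of this theorem; here you may simply quote \eqref{est3}--\eqref{est4}. Note also that the paper's estimates have $L^2$ on the left and the \emph{negative} norm $W^{-1}$ on the right, which is not the inequality $\|v\|_{W^1_{BC\cup\sigma}}\le K_3\|(\mathcal{O}-\lambda)v\|_{L^2}$ you wrote down.

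This difference in norms is precisely where your existence step breaks. From your estimate, the functional $\ell\big((\mathcal{O}-\lambda)v\big)=\int_\Omega\tilde f\,v$ is bounded on the range of $\mathcal{O}-\lambda$ viewed as a subspace of $L^2(\Omega)$, and Hahn--Banach plus Riesz in $L^2$ produce a representative $w\in L^2(\Omega)$. But the theorem asserts $u\in W^1_{AC\cup\sigma}$, and nothing in your argument upgrades $w$ from $L^2$ to $W^1_{AC\cup\sigma}$; the direct a~priori estimate applies only to functions already in that space, so invoking it for ``boundedness of $S^{\lambda,\gamma}_{AC\cup\sigma}$'' is circular. The paper avoids this by working with the $W^1_{AC\cup\sigma}/W^{-1}_{AC\cup\sigma}$ duality: from \eqref{est4} the functional $\mathcal{L}_f\big((\mathcal{O}-\lambda)v\big)=(v,f)_{L^2}$ is bounded on $(\mathcal{O}-\lambda)(W^1_{BC\cup\sigma})\subset W^{-1}_{AC\cup\sigma}$, Hahn--Banach extends it to all of $W^{-1}_{AC\cup\sigma}$, and Lax's Riesz theorem for negative-norm spaces then yields the representative directly in $W^1_{AC\cup\sigma}$.

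Two smaller points. In this paper $\gamma$ is a real constant, so the lift is simply $u=w+\gamma$; then $\mathcal{O}(\gamma)=0$ and $\tilde f=f+\lambda\gamma\in L^2(\Omega)$, with none of the trace or dual-space issues you anticipate (also, a $G\in W^1_{AC\cup\sigma}$ has zero trace on $AC\cup\sigma$ by definition, so it cannot lift a nonzero $\gamma$). And uniqueness in the paper is immediate from \eqref{est3}: if $(\mathcal{O}_{AC}-\lambda)(u_1-u_2)=0$ in $W^{-1}_{BC\cup\sigma}$ then $\|u_1-u_2\|_{L^2}=0$.
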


This paper will be divided into two parts. In section \ref{sec2}, we define an admissible Tricomi domain, then we will prove the existence of an admissible domain for the operator $\mathcal{O}$ and consequently for the operator $\mathcal{O-\lambda}$. We present the spaces associated to the mentioned problems and their respective dual space in order to analyze how they are related and obtain a better understanding of our operator. In section \ref{sec3}, we will present the main results in this paper, we start prove uniqueness and existence of the weak solution for the problem \eqref{LP} and after with the help of this we proof the uniqueness and existence of the weak solutions for the problem \eqref{LPL}.

\section{Statement of the Results}\label{sec2}

In this section we will define spaces weighted Sobolev naturally  associated with the problems \eqref{LP} and \eqref{ALP}, their dual spaces and their norms, as well as, we will define a specific Tricomi domains for our work and we will prove the existence of this domain.

\vspace{0,3cm}

Consider, $$C_{AC\cup\sigma}^{\infty}(\overline{\Omega})=\{u\in C^\infty(\overline{\Omega}); \ \ u|_{AC\cup\sigma}=0\},$$
and
$$C_{BC\cup\sigma}^{\infty}(\overline{\Omega})=\{v\in C^\infty(\overline{\Omega}); \ \ v|_{BC\cup\sigma}=0\}.$$

Define $W^1_{AC\cup\sigma}$ and $W^1_{BC\cup\sigma}$ the closures of $C_{AC\cup\sigma}^{\infty}(\overline{\Omega})$ and $C_{BC\cup\sigma}^{\infty}(\overline{\Omega})$ with respect to the norm of $W^{1,2}(\Omega),$ respectively. By the theory of space with negative norm \cite{lax} consider the dual space $W^{-1}_{AC\cup\sigma}$ and $W^{-1}_{BC\cup\sigma}$ of $W^1_{AC\cup\sigma}$ and $W^1_{BC\cup\sigma}$ respectively with the norm 
$$||\varphi||_{W^{-1}_{AC\cup\sigma}}= \sup_{0\neq u\in W^1_{AC\cup\sigma}}\frac{|(\varphi,u)_{L^2}|}{||u||_{W^1_{AC\cup\sigma}}},$$
and
$$||\phi||_{W^{-1}_{BC\cup\sigma}}= \sup_{0\neq v\in W^1_{BC\cup\sigma}}\frac{|(\phi,v)_{L^2}|}{||v||_{W^1_{BC\cup\sigma}}}.$$

We have the following inclusions 
$$ W^1_{AC\cup\sigma}\subset L^2(\Omega)\subset W^{-1}_{AC\cup\sigma} \ \ \ \mbox{and} \ \ \ W^1_{BC\cup\sigma}\subset L^2(\Omega)\subset W^{-1}_{BC\cup\sigma}.$$

In \cite{olirodcar}, we defined $\mathcal{X}u=(-y^{m_1}u_x,-x^{m_2}u_y)$ and we observed that $div(\mathcal{X}u)=\mathcal{O}u$. By the Divergence Theorem \cite{giltru}, we have
\begin{align*}
    \int_{\Omega} div(v(-y^{m_1}u_x,-x^{m_2}u_y))=\int_{\partial\Omega} v(-y^{m_1}u_x,-x^{m_2}u_y) \cdot \eta \; ,
\end{align*}
and
\begin{align*}
    \int_{\Omega} div(u(-y^{m_1}v_x,-x^{m_2}v_y))=\int_{\partial\Omega} u(-y^{m_1}v_x,-x^{m_2}v_y) \cdot \eta \; .
\end{align*}

By the proprieties of the divergent, we obtain 
\begin{align*}
    \int_{\Omega} \nabla v(-y^{m_1}u_x,-x^{m_2}u_y)+&\int_{\Omega} v \; div(-y^{m_1}u_x,-x^{m_2}u_y)
    \\&=\int_{\partial\Omega} v(-y^{m_1}u_x,-x^{m_2}u_y) \cdot \eta \; ,
\end{align*}
and 
\begin{align*}
    \int_{\Omega} \nabla u(-y^{m_1}v_x,-x^{m_2}v_y)+&\int_{\Omega} u \; div(-y^{m_1}v_x,-x^{m_2}v_y)
    \\&
    =\int_{\partial\Omega} u(-y^{m_1}v_x,-x^{m_2}v_y) \cdot \eta \; ,
\end{align*}
respectively. Replacing, $\nabla v=(v_x,v_y)$, $\nabla u=(u_x,u_y)$ and $div(\mathcal{X}u)=\mathcal{O}u,$ we get
\begin{align*}
    \int_{\Omega} (v_x,v_y)(-y^{m_1}u_x,-x^{m_2}u_y)+\int_{\Omega} v \; \mathcal{O}u=\int_{\partial\Omega} v(-y^{m_1}u_x,-x^{m_2}u_y) \cdot \eta \; ,
\end{align*}
and
\begin{align*}
    \int_{\Omega} (u_x,u_y)(-y^{m_1}v_x,-x^{m_2}v_y)+\int_{\Omega} u \; \mathcal{O}v=\int_{\partial\Omega} u(-y^{m_1}v_x,-x^{m_2}v_y) \cdot \eta \; .
\end{align*}

Therefore, for $u\in W^1_{AC\cup\sigma}$ and $v\in W^1_{BC\cup\sigma}$, we get
\begin{align*}
    \int_{\Omega} (v_x,v_y)(-y^{m_1}u_x,-x^{m_2}u_y)+\int_{\Omega} v \; \mathcal{O}u=\int_{AC} v(-y^{m_1}u_x,-x^{m_2}u_y) \cdot \eta \; ,
\end{align*}
and
\begin{align*}
    \int_{\Omega} (u_x,u_y)(-y^{m_1}v_x,-x^{m_2}v_y)+\int_{\Omega} u \; \mathcal{O}v=\int_{BC} u(-y^{m_1}v_x,-x^{m_2}v_y) \cdot \eta \; .
\end{align*}

As made in \cite{olirodcar}, $(\mathcal{X}u\cdot\eta)_{|_{AC}}\equiv0$ and $(\mathcal{X}v\cdot\eta)_{|_{BC}}\equiv0$. Hence 

\begin{align*}
    \int_{\Omega} (v_x,v_y)(-y^{m_1}u_x,-x^{m_2}u_y)+\int_{\Omega} v \; \mathcal{O}u=0,
\end{align*}
and
\begin{align*}
    \int_{\Omega} (u_x,u_y)(-y^{m_1}v_x,-x^{m_2}v_y)+\int_{\Omega} u \; \mathcal{O}v=0.
\end{align*}

In conclusion, $$(\mathcal{O}u,v)_{L^2}=\int_{\Omega} v \; \mathcal{O}u=\int_{\Omega} (y^{m_1}u_xv_x+x^{m_2}u_yv_y)=\int_{\Omega} u \; \mathcal{O}v=(u,\mathcal{O}v)_{L^2}.$$


Note that, for $u\in W^1_{AC\cup\sigma}$ 
\begin{align*}
||\mathcal{O}u||_{W^{-1}_{BC\cup\sigma}}=& \sup_{0\neq v\in W^1_{BC\cup\sigma}}\frac{|(\mathcal{O}u,v)_{L^2}|}{||v||_{W^1_{BC\cup\sigma}}} \\
=& \sup_{0\neq v\in W^1_{BC\cup\sigma}}\frac{|\int_\Omega(y^{m_1}u_xv_x+x^{m_2}u_yv_y)|}{||v||_{W^1_{BC\cup\sigma}}} \\
=& \sup_{0\neq v\in W^1_{BC\cup\sigma}}\frac{|(u,\mathcal{O}v)_{L^2}|}{||v||_{W^1_{BC\cup\sigma}}} \\
\leq& \sup_{0\neq v\in W^1_{BC\cup\sigma}}\frac{|(u,\mathcal{O}v)_{L^2}|}{K_1||v||_{L^2}} \\
\leq& \sup_{0\neq v\in W^1_{BC\cup\sigma}}\frac{||u||_{L^2}||\mathcal{O}v||_{L^2}}{K_1||v||_{L^2}},
\end{align*}
since $\mathcal{O}$ is continue in $L^2$. Then, 
\begin{align*}
||\mathcal{O}_{AC}u||_{W^{-1}_{BC\cup\sigma}}\leq C_1 ||u||_{W^1_{AC\cup\sigma}} \ , \ \ u\in W^1_{AC\cup\sigma},
\end{align*}
similarly, for $v\in W^1_{BC\cup\sigma}$
\begin{align*}
||\mathcal{O}_{BC}v||_{W^{-1}_{AC\cup\sigma}}\leq C_2 ||v||_{W^1_{BC\cup\sigma}} \ , \ \ v\in W^1_{BC\cup\sigma},
\end{align*}
where $\mathcal{O}_{AC}u$ and $\mathcal{O}_{BC}v$ is the unique continuous extensions of $\mathcal{O}$ relative to the dense subspaces $C_{AC\cup\sigma}^{\infty}(\overline{\Omega})$ and $C_{BC\cup\sigma}^{\infty}(\overline{\Omega})$ respectively. The following definitions, propositions and remarks are adaptations of the definitions found in \cite{lupopayne} for our purpose.


\begin{definition}
Let $\lambda\in\mathbb{R}$. A Tricomi domain $\Omega$ is said \textcolor{blue}{admissible} for the operator $\mathcal{O}-\lambda$, if there are positive constants  $C_3$ and $C_4$ such that
\begin{align}\label{est3}
||u||_{L^2(\Omega)}\leq C_3||(\mathcal{O}_{AC} -\lambda)u||_{W^{-1}_{BC\cup\sigma}}, \ \ u\in W^1_{AC\cup\sigma},
\end{align}
and
\begin{align}\label{est4}
||v||_{L^2(\Omega)}\leq C_4||(\mathcal{O}_{BC} -\lambda)v||_{W^{-1}_{AC\cup\sigma}}, \ \ v\in W^1_{BC\cup\sigma}.
\end{align}
\end{definition}

\begin{theorem}
    Let $\Omega$ a Tricomi domain with $A=(-x_1,0)$ and $B=(x_0,0)$ with $0<x_0<x_1$ suppose that $\Omega$ and its boundary $\partial\Omega=\sigma\cup AC\cup BC$ satisfies:
    \begin{enumerate}
        \item[\textit{i)}] $|x|\leq x_1$ on $\overline{\Omega}$,
        \item[\textit{ii)}] The curve $\sigma$ is given by a $C^2$ graph $y=g(x)$ for $-x_1\leq x \leq x_0$ with $g(-x_1)=0=g(x_0)$,
        \item[\textit{iii)}] There is a constant $$h<{\left(\frac{x^{m_2}_c}{y^{m_1}_c}\right)}^{\frac{1}{2}}$$ such that $$-h<g'(x)<h \ \ \ \mbox{for} \ \ \ -x_1<x<x_0.$$
    \end{enumerate}
Then $\Omega$ is admissible for the operator $\mathcal{O}-\lambda$ for all $\lambda \leq \frac{1}{K_3},$ for a suitable $K_3>0.$ 
\end{theorem}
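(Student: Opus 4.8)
The plan is to adapt Didenko's energy-integral argument to the geometry singled out by (i)--(iii). By the symmetry of the construction under interchanging the two characteristics --- which swaps $A\leftrightarrow B$, $x_1\leftrightarrow x_0$, $AC\leftrightarrow BC$ and the spaces $W^1_{AC\cup\sigma}\leftrightarrow W^1_{BC\cup\sigma}$ --- it suffices to prove \eqref{est3}; \eqref{est4} follows by the identical reasoning. Since $C^{\infty}_{AC\cup\sigma}(\overline{\Omega})$ is dense in $W^1_{AC\cup\sigma}$, $\mathcal{O}_{AC}-\lambda$ extends to a bounded operator $W^1_{AC\cup\sigma}\to W^{-1}_{BC\cup\sigma}$ (by the continuity computation shown above), and both sides of \eqref{est3} depend continuously on $u$ in the $W^1_{AC\cup\sigma}$-topology, it is enough to prove \eqref{est3} for $u\in C^{\infty}_{AC\cup\sigma}(\overline{\Omega})$.

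For such $u$ the crux is to produce an operator $\mathcal{M}$ (Didenko's ``multiplier'') mapping $W^1_{AC\cup\sigma}$ boundedly into $W^1_{BC\cup\sigma}$, smoothing of order $-1$ so that $\|\mathcal{M}u\|_{W^1_{BC\cup\sigma}}\le C_1\|u\|_{L^2(\Omega)}$, and satisfying the energy inequality $\big((\mathcal{O}-\lambda)u,\mathcal{M}u\big)_{L^2(\Omega)}\ge c_0\|u\|^2_{L^2(\Omega)}$ with $c_0>0$; concretely one builds $\mathcal{M}$ from a first-order ``$abc$''-type form combined with the solution of an auxiliary problem on the two subdomains cut out by the segment $AB$ --- a Goursat/Cauchy-type problem in the hyperbolic triangle $ABC$ (well posed because $AC$, $BC$ are the characteristics of $\mathcal{O}$) and an auxiliary Dirichlet problem in the elliptic region bounded by $\sigma$, matched along $AB$. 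Given such an $\mathcal{M}$, inequality \eqref{est3} is immediate: since $\mathcal{M}u\in W^1_{BC\cup\sigma}$, the definition of the dual norm yields
\begin{align*}
c_0\|u\|^2_{L^2(\Omega)} &\le \big((\mathcal{O}_{AC}-\lambda)u,\mathcal{M}u\big)_{L^2} \le \|(\mathcal{O}_{AC}-\lambda)u\|_{W^{-1}_{BC\cup\sigma}}\,\|\mathcal{M}u\|_{W^1_{BC\cup\sigma}} \\
&\le C_1\,\|(\mathcal{O}_{AC}-\lambda)u\|_{W^{-1}_{BC\cup\sigma}}\,\|u\|_{L^2(\Omega)},
\end{align*}
hence \eqref{est3} with $C_3=C_1/c_0$.

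It remains to establish the energy inequality, and this is where (i)--(iii) and the bound on $\lambda$ come in. Writing $(\mathcal{O}-\lambda)u=-y^{m_1}u_{xx}-x^{m_2}u_{yy}-\lambda u$ and integrating by parts twice against $\mathcal{M}u$ produces an identity whose right-hand side is a bulk quadratic integral $\int_{\Omega}\mathcal{Q}\big(u,|y|^{m_1/2}u_x,|x|^{m_2/2}u_y\big)$ plus boundary integrals over $\sigma$, $AC$ and $BC$. Condition (i), $|x|\le x_1$ on $\overline{\Omega}$, keeps all coefficients (and their derivatives) bounded, so that $\mathcal{Q}$ is coercive except for a $u^2$-term with coefficient $\kappa_1-\kappa_3\lambda$, $\kappa_i>0$; setting $K_3:=\kappa_3/\kappa_1$, for every $\lambda\le 1/K_3$ this is nonnegative and $\int_\Omega\mathcal{Q}\ge c_0\|u\|^2_{L^2}$. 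On $BC$ the boundary term is absorbed (or killed) using the boundary condition carried by $\mathcal{M}u$ and the characteristic nature of $BC$; on $AC$, since $u|_{AC}=0$, the only surviving contribution carries the coefficient $y^{m_1}(\eta_{AC})_1^2+x^{m_2}(\eta_{AC})_2^2$ of $(\partial_n u)^2$, which vanishes identically on the characteristic $AC$ for the explicit $\eta_{AC}$ recorded in the introduction; on $\sigma$, $u|_\sigma=0$ leaves only a term $\propto(\partial_n u)^2$ whose coefficient, after substituting the outward normal $(g'(x),-1)/\sqrt{1+g'(x)^2}$ of the graph $y=g(x)$ from (ii), is an explicit quadratic in $g'(x)$, and condition (iii), namely $|g'(x)|<h<(x_c^{m_2}/y_c^{m_1})^{1/2}$, is exactly the inequality that forces this coefficient to be sign-definite in the required direction. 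I expect the genuine difficulty to be the construction of a single $\mathcal{M}$ reconciling all of these demands at once --- smoothing, compatible with the $BC\cup\sigma$ boundary condition, making $\mathcal{Q}$ coercive (whence the threshold $\lambda\le 1/K_3$) and making the $\sigma$-density have the correct sign (whence the slope threshold in (iii)) --- together with the fact that, unlike in the Tricomi or single-weight Gellerstedt case, the two degenerate weights $y^{m_1}$ and $x^{m_2}$ are both present, so every integration by parts, every characteristic computation and the matching across $AB$ must be carried out with both.
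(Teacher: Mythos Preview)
Your overall architecture is correct --- pair $(\mathcal{O}-\lambda)u$ with a test element $\mathcal{M}u\in W^1_{BC\cup\sigma}$, squeeze a lower bound out of the pairing, and read off \eqref{est3} by duality --- but the concrete construction of $\mathcal{M}$ and the way hypotheses (ii)--(iii) enter are different from what you describe, and your version has gaps.

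In the paper the multiplier is not assembled from separate hyperbolic and elliptic auxiliary problems matched across $AB$. It is a \emph{single} first-order transport: one takes $a=-(1+\epsilon x)$, $b=ha$ (with $0<\epsilon<1/x_0$ and $h$ as in (iii)) and defines $v=\mathcal{M}u$ as the solution of
\[
Dv:=av_x+bv_y=u\quad\text{in }\Omega,\qquad v=0\ \text{on }BC\cup\sigma.
\]
The lower slope bound $|g'(x)|<h$ in (iii) is used here, not for a sign on $\sigma$: since the $D$-characteristics are straight lines of slope $h$, (ii)--(iii) guarantee that every such line meets $BC\cup\sigma$ exactly once, so the Cauchy problem is well posed (this is made explicit via the shear $\Phi(x,y)=(y+h(x+x_1),\,-y+h(x+x_1))$). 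No elliptic Dirichlet problem appears.

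The boundary bookkeeping is also reversed relative to your sketch. One writes $(\mathcal{O}u,v)_{L^2}$ entirely in terms of $v$ (substituting $u_x,u_y$ through $Dv=u$), and the only surviving boundary term is
\[
\tfrac12\int_{AC}\big(y^{m_1}v_x^2+x^{m_2}v_y^2\big)(a,b)\cdot\eta_{AC},
\]
the $\sigma$ and $BC$ contributions vanishing because $v\equiv0$ there. On $AC$ one has $u=0$, hence $v_x=-hv_y$, and the integrand becomes $(y^{m_1}h^2+x^{m_2})v_y^2$ times a nonnegative factor; the \emph{upper} bound $h<(x_c^{m_2}/y_c^{m_1})^{1/2}$ in (iii) is exactly what makes $y^{m_1}h^2+x^{m_2}>0$ along $AC$. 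So (iii) controls the $AC$ term, not the $\sigma$ term, and that term is nonnegative rather than zero.

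Finally, the energy inequality one actually obtains is $(v,\mathcal{O}u)_{L^2}\ge\delta\|\nabla v\|_{L^2}^2$ (via positivity of a $2\times2$ form in $(v_x,v_y)$), not a direct lower bound by $\|u\|_{L^2}^2$; one recovers $\|u\|_{L^2}$ at the end from $u=Dv$ and Poincar\'e. The paper also separates the $\lambda$-dependence: it first proves \eqref{est3} for $\lambda=0$ and then perturbs by the triangle inequality $\|\mathcal{O}_{AC}u\|_{W^{-1}_{BC\cup\sigma}}\le\|(\mathcal{O}_{AC}-\lambda)u\|_{W^{-1}_{BC\cup\sigma}}+|\lambda|\,K_3\|u\|_{L^2}$, which is where the threshold $\lambda\le 1/K_3$ arises; building $\lambda$ into the coercivity of $\mathcal{Q}$ as you propose is plausible but is not what is done here.
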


\begin{proof}

Initially, we will prove for case $\lambda=0.$ The proof of this case will be done in six steps. In the  step 1, we will define an auxiliary Cauchy problem. While in step 2, we prove the existence and uniqueness of the solution for the previous problem. In step 3 and step 4, we will estimate $\int_{AC} (y^{m_1}v^2_x+x^{m_2}v^2_y)(a,b)\eta_{AC}$ and $\int_\Omega v \,\mathcal{O}u$, respectively. We will relate the step 3 and the step 4 in step 5. Finally, we will estimate $(Iu,\mathcal{O}u)_{L^2}$ from above and from bellow in the step 6 to prove the inequality \eqref{est3}, where $I$ is the operator solution of Cauchy problem.

\textbf{Step 1.} Consider, $a=-(1+\epsilon x)$, $b=-h(1+\epsilon x)$ and $c=0$ where $h$ is given by the hypotheses $iii)$ and $0<\epsilon<\frac{1}{x_0}$. This way, $a$ and $b$ never vanish on $\overline{\Omega}$. Define the auxiliary Cauchy problem as being
\begin{align}\label{PCA}
    \begin{cases}
        Dv=av_x+bv_y=u  &\mbox{in} \quad \Omega, \\
        \quad v=0  &\mbox{on} \quad BC\cup\sigma\subseteq\partial\Omega.
    \end{cases}
\end{align}

\textbf{Step 2.} We will use the following change of coordinates 
$$(\xi,\eta)=\Phi(x,y)=(y+h(x+x_1),-y+h(x+x_1)).$$

Define, $w(\xi,\eta)=v(x,y)$ and note that,
\begin{align*}
    Dw&=aw_\xi h+aw_\eta h+bw_\xi-bw_\eta \\
    &=(ah+b)w_\xi+(ah-b)w_\eta,
\end{align*}
as $ah=b$ then, 
\begin{align*}
    Dw=2ahw_\xi.
\end{align*}

See that, 
$$\Phi(A)=(0,0), \, \Phi(B)=(h(x_0+x_1),h(x_0+x_1)$$
and
$$\Phi(C)=(y_c+h(x_c+x_1),-y_c+h(x_c+x_1).$$

Since the coordinate lines $\{\eta_0 = Cte \}$ intersect $\Phi(BC\cup\sigma)$ only once, we can conclude that $\Phi(BC\cup\sigma)$ can be written as the graph of a function $\psi(\eta)=\xi$ for $\eta\in [0, -y_c+h(x_c+x_1)]$. See figure \ref{mudevar}.

\begin{figure}[!ht]
    \centering
    \includegraphics[trim = 40mm 0mm 30mm 0mm, clip, width=5cm,angle=90]{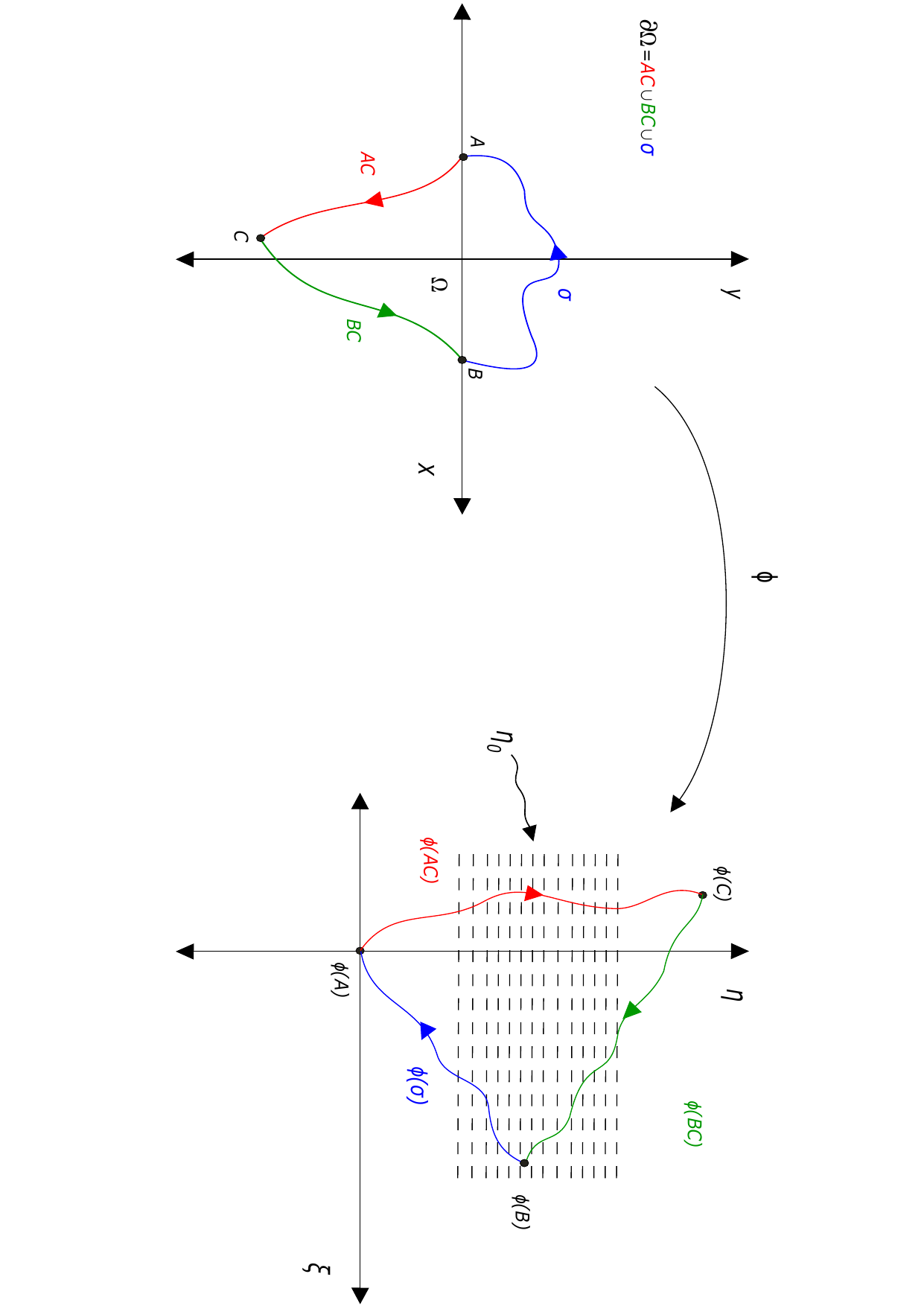}
    \caption{Change of coordinates $\Phi:\Omega \rightarrow \Phi(\Omega)$.}
    \label{mudevar}
\end{figure}

Integrating on the variable $\xi$ we get,
\begin{align*}
    v(x,y)=w(\xi,\eta)=\frac{1}{2h}\int^\xi_{\psi(\eta)} (a^{-1}u)(\Phi^{-1}(t,\eta))dt.
\end{align*}

Define $Iu=v$ as been the operator solution such that for each $u\in W^1_{AC\cup\sigma}$ associate the unique solution $v\in W^1_{BC\cup\sigma}$ of the auxiliary Cauchy problem \eqref{PCA}.

\vspace{0,3cm}

\textbf{Step 3.} We will find a suitable expression to $$\int_{AC}(y^{m_1}v_x^2+x^{m_2}v_y^2)(a,b)\eta_{AC}.$$

By the divergence Theorem in \cite{giltru}, we get 
\begin{align*}
    \int_\Omega div \big( (y^{m_1}v_x^2+x^{m_2}v_y^2)(a,b)\big)=\int_{\partial\Omega}(y^{m_1}v_x^2+x^{m_2}v_y^2)(a,b)\eta_{AC},
\end{align*}

by the proprieties of the divergent 
\begin{align*}
    \int_\Omega \nabla(y^{m_1}v_x^2+x^{m_2}v_y^2) (a,b)+ \int_\Omega  (y^{m_1}v_x^2&+x^{m_2}v_y^2) \, div (a,b) \\
    &=\int_{\partial\Omega}(y^{m_1}v_x^2+x^{m_2}v_y^2)(a,b)\eta_{AC}.
\end{align*}

Developing we have,
\begin{align*}
    \int_\Omega \nabla(y^{m_1}v_x^2+x^{m_2}&v_y^2) (a,b)
    =\int_\Omega (2y^{m_1}v_xv_{xx}+m_2^{m_2-1}v_y^2 \\
    &+2x^{m_2}v_yv_{xy},m_1y^{m_1-1}v_x^2+2y^{m_1}v_xv_{xy}+2x^{m_2}v_yv_{yy})(a,b) \\
    =&\int_\Omega (2ay^{m_1}v_xv_{xx}+am_2^{m_2-1}v_y^2 \\
    &+2ax^{m_2}v_yv_{xy}+bm_1y^{m_1-1}v_x^2+2by^{m_1}v_xv_{xy}+2bx^{m_2}v_yv_{yy}),
\end{align*}

and

\begin{align*}
    \int_\Omega  (y^{m_1}v_x^2+x^{m_2}v_y^2) \, div (a,b)=&\int_\Omega (y^{m_1}v_x^2+x^{m_2}v_y^2)(a_x+b_y) \\ =&\int_\Omega (a_x y^{m_1}v_x^2+a_x x^{m_2}v_y^2+b_y y^{m_1}v_x^2+b_y x^{m_2}v_y^2).
\end{align*}

We know that $v\equiv 0$ on $BC\cup\sigma$ then $y^{m_1}v_x^2+x^{m_2}v_y^2\equiv 0$ on $BC\cup\sigma$ as done in Step 1 in \cite{olirodcar}. So, 
\begin{align}\label{Passo3}
    \int_{AC}(y^{m_1}v_x^2+x^{m_2}v_y^2&)(a,b)\eta_{AC}=\int_\Omega (2ay^{m_1}v_xv_{xx}+am_2^{m_2-1}v_y^2 \nonumber \\
    &+2ax^{m_2}v_yv_{xy}+bm_1y^{m_1-1}v_x^2+2by^{m_1}v_xv_{xy} +2bx^{m_2}v_yv_{yy} \nonumber \\ &+a_x y^{m_1}v_x^2+a_x x^{m_2}v_y^2+b_y y^{m_1}v_x^2+b_y x^{m_2}v_y^2).
\end{align}

\textbf{Step 4.} We will find a suitable expression to $$\int_\Omega v \,\mathcal{O}u.$$

We know that,
\begin{align}\label{vOu}
    \int_{\Omega} v \,\mathcal{O}u=\int_\Omega (y^{m_1}u_xv_y+x^{m_2}u_xv_y).
\end{align}

As $Dv=av_x+bv_y=u$ on $\Omega$ then,
\begin{align}\label{uxuy}
    u_x=a_xv_x+av_{xx}+b_xv_y+bv_{xy} \;\;\mbox{and}\;\; u_y=a_yv_x+av_{xy}+b_yv_y+bv_{yy}.
\end{align}

Replacing \eqref{uxuy} in \eqref{vOu}, we have

\begin{align}\label{passo4}
    \int_{\Omega} v \,\mathcal{O}u=&\int_\Omega (a_xy^{m_1}v^2_x+ay^{m_1}v_xv_{xx}+b_xy^{m_1}v_xv_y+by^{m_1}v_xv_{xy} \nonumber \\  &+a_yx^{m_2}v_xv_y+ax^{m_2}v_yv_{xy}+b_yx^{m_2}v^2_y+bx^{m_2}v_yv_{yy})
\end{align}

\vspace{.3cm}
\textbf{Step 5.} We will find suitable coefficients $\alpha$, $\beta$ and $\gamma$ such that 
$$\int_{\Omega} v \,\mathcal{O}u = \int_{\Omega} \alpha v^2_x+2\beta v_xv_y+\gamma v^2_y+\frac{1}{2}\int_{AC}(y^{m_1}v^2_x+x^{m_2}v^2_y)(a,b)\eta_{AC}.$$

We know that $Dv=av_x+bv_y=u$. On $AC$ we have $u\equiv 0$ then $av_x=-bv_y=-ahv_y$. So, $h^2v^2_y=v^2_x$. Therefore, 
\begin{align*}
    (y^{m_1}v^2_x+x^{m_2}v^2_y)(a,b)\eta_{AC}=(y^{m_1}h^2+x^{m_2})v^2_y(-(1+\epsilon x), -h(1+\epsilon x))\eta_{AC}.
\end{align*}
 How $-x_1<x<x_c\leq 0$ and $y_c\leq y\leq 0$ on $AC$ hence $x^{m_2}_c\leq x^{m_2}$ and $y^{m_1}_c\leq y^{m_1}$ because $m_1\in\mathbb{N}$ and $\frac{m_2}{2}\in\mathbb{N}$ are odd and even numbers, respectively. By using the hypothesis $h<{\left(\dfrac{x^{m_2}_c}{y^{m_1}_c}\right)}^{\frac{1}{2}},$ we get 
\begin{align*}
    h<\left(\frac{x^{m_2}_c}{-y^{m_1}_c}\right)^\frac{1}{2}\leq \left(\frac{x^{m_2}}{-y^{m_1}}\right)^\frac{1}{2}.
\end{align*}
Then, $y^{m_1}h^2+x^{m_2}>0$. In conclusion, 
$$\int_{AC}(y^{m_1}v^2_x+x^{m_2}v^2_y)(a,b)\eta_{AC}\geq 0.$$ 

Combining \eqref{Passo3} and \eqref{passo4} and the last inequality, we obtain
$$\int_{\Omega} v \,\mathcal{O}u \geq \int_{\Omega} \alpha v^2_x+2\beta v_xv_y+\gamma v^2_y,$$
where $\alpha=-\epsilon y^{m_1}-h(1+\epsilon x)m_1y^{m_1-1}$, $\beta=-h\epsilon y^{m_1}$ and $\gamma=(1+\epsilon x)m_2x^{m_2-1}+\epsilon x^{m_2}$. By the hypotheses \textit{ii).} and \textit{iii)} and the choice of $0<\epsilon<\frac{1}{x_0}$ we can proof with the standard calculations that $\alpha\gamma-\beta^2>0$.

We can write 
\begin{align*}
    \mathbf{A}=\left(\begin{array}{cc}
\alpha & \beta\\
\beta & \gamma
\end{array}\right)
\end{align*}
and see that 

\begin{align*}
    \left(\begin{array}{cc}
     v_x    &  v_y 
    \end{array}\right)
    \left(\begin{array}{cc}
\alpha & \beta\\
\beta & \gamma
\end{array}\right)
\left(\begin{array}{cc}
 v_x\\
 v_y
\end{array}\right)
= \alpha v^2_x+2\beta v_xv_y+\gamma v^2_y.
\end{align*}
  On the matrix space $\mathbb{M}_{2x1}$, we can consider the norm $|| X ||_1=X A X ^T$ para $A$ defined positively, because $\alpha.\beta - \gamma^2 >0$, and we have the stard norm on $\mathbb{M}_{2x1}$ given by $||X||_2=X X^T$. As $\mathbb{M}_{2x1}$ is a finite dimensional space we know that exist $\delta>0$ such that $\delta||X||_2\leq||X||_1$ for every matrix $X\in\mathbb{M}_{2x1}$. Consider $X=\left(\begin{array}{cc} v_x & v_y \end{array}\right)$ we have,

$$\int_{\Omega} v \,\mathcal{O}u \geq \int_{\Omega} \alpha v^2_x+2\beta v_xv_y+\gamma v^2_y.\geq \delta\int_\Omega v^2_x+v^2_y.$$
This is, 
\begin{align*}
    \int_{\Omega} v \,\mathcal{O}u \geq \delta ||\nabla v||^2_{L^2(\Omega)}.
\end{align*}

\textbf{Step 6.} Note that, 
\begin{align*}
    (Iu,\mathcal{O}u)_{L^2(\Omega)}=\int_\Omega v \,\mathcal{O}u.
\end{align*}
So, $(Iu,\mathcal{O}u)_{L^2(\Omega)}\geq \delta ||\nabla v||^2_{L^2(\Omega)}.$ By the generalize Cauchy inequality we get,
\begin{align*}
    \delta ||\nabla v||^2_{L^2(\Omega)}\leq(Iu,\mathcal{O}u)_{L^2(\Omega)}\leq ||Iu||_{W^1_{BC\cup\sigma}}||\mathcal{O}u||_{W^{-1}_{BC\cup\sigma}}
\end{align*}

On the other hand, by definition for any $v\in W^1_{BC\cup\sigma}$ we have
\begin{align*}
    ||v||^2_{W^1_{BC\cup\sigma}}=||\nabla_{m_1,m_2} v||^2_{L^2(\Omega)}+||v||^2_{L^2(\Omega)}.
\end{align*}

By the Poincare inequality, exist $k>0$ such that 
\begin{align*}
    ||v||_{L^2{\Omega)}}\leq k ||\nabla v||_{L^2(\Omega)}.
\end{align*}

    See that, $||\nabla_{m_1,m_2} v||_{L^2(\Omega)}\leq \overline{k}||\nabla v||_{L^2(\Omega)}$. In fact, as $\Omega$ is a bounded set, there is a constant $K\in\mathbb{R}$ such that $|y|^{\frac{m_1}{2}}<K^{m_1}$ and $|x|^{\frac{m_2}{2}}<K^{m_2}$. Therefore,
\begin{align*}
    ||v||^2_{W^1_{BC\cup\sigma}}\leq \overline{k}^2||\nabla v||^2_{L^2(\Omega)}+k^2||\nabla v||^2_{L^2(\Omega)}\leq {k'}^2 ||\nabla v||^2_{L^2(\Omega)}.
\end{align*}

Since, $Iu=v$ in $\Omega$ we get,
\begin{align*}
   C||Iu||^2_{W^1_{BC\cup\sigma}} \leq||Iu||_{W^1_{BC\cup\sigma}}||\mathcal{O}u||_{W^{-1}_{BC\cup\sigma}},
\end{align*}
so, 
\begin{align*}
     C||Iu||_{W^1_{BC\cup\sigma}} \leq ||\mathcal{O}u||_{W^{-1}_{BC\cup\sigma}}.
\end{align*}

Remembering that $D$ is a first order differential operator with smooth coefficients in $\Omega$ we know that there is a constant $0<C_2 \in\mathbb{R}$ such that 
\begin{align*}
    ||Dv||_{L^2(\Omega)} \leq C_2||v||_{W^1_{BC\cup\sigma}}
\end{align*}
then, 
\begin{align*}
    ||u||_{L^2(\Omega)}=||Dv||_{L^2(\Omega)}\leq C_2||v||_{W^1_{BC\cup\sigma}}\leq \frac{C_2}{C}||Iu||_{W^1_{BC\cup\sigma}}\leq \frac{C_2}{C}||\mathcal{O}u||_{W^{-1}_{BC\cup\sigma}}.
\end{align*}
This proof the estimate \eqref{est3} $(\lambda=0)$. To proof the estimate \eqref{est4} $(\lambda=0)$ is the similar to the previous proof considering $a=(1-\epsilon x)$, $b=-ah$ and $c=0$ using the inequality $-h<g'(x)$ of the hypothesis \textit{iii)}.

Now, we will prove for the general case $\lambda\leq \frac{1}{K_3}.$ Since $\Omega$ is admissible for the operator $\mathcal{O}$ we win the propriety that $\Omega$ is admissible for the operator $\mathcal{O-\lambda}$ for a suitable $\lambda$. Indeed, 
$$||u||_{L^2(\Omega)}\leq C_3||\mathcal{O}_{AC} u-\lambda u+\lambda u||_{W^{-1}_{BC\cup\sigma}}\cdot$$
Then, 
$$||u||_{L^2(\Omega)}\leq C_3||\mathcal{O}_{AC} u-\lambda u||_{W^{-1}_{BC\cup\sigma}}+||\lambda u||_{W^{-1}_{BC\cup\sigma}}\cdot$$
There is a constant $K_3>0$ such that $||\lambda u||_{W^{-1}_{BC\cup\sigma}}\leq K_3||\lambda u||_{L^2(\Omega)}$. So, 
$$||u||_{L^2(\Omega)}-K_3||\lambda u||_{L^2(\Omega)}\leq C_3||\mathcal{O}_{AC} u-\lambda u||_{W^{-1}_{BC\cup\sigma}}\cdot$$
Therefore, for $\lambda\leq\frac{1}{K_3}$ we consider $\overline{C}_3=\frac{C_3}{1-\lambda K_3}$ and we get 
$$||u||_{L^2(\Omega)}\leq \overline{C}_3||\mathcal{O}_{AC} u-\lambda u||_{W^{-1}_{BC\cup\sigma}}\cdot$$
In conclusion, we proof the estimate \eqref{est3}, the proof of estimate \eqref{est4} is similar to the previous one.
\end{proof}

\begin{definition}\label{Def4}
     Give $f\in L^2(\Omega)$ we said that $u\in W^1_{AC\cup\sigma}$ is a \textcolor{blue}{weak solution} of \, \eqref{LPL} if the following relation is hold 
\begin{align*}
      \mathcal{B}_\lambda(x,y):= \int_\Omega (y^{m_1}u_xv_x+x^{m_2}u_yv_y-\lambda uv) \ dxdy= \int_{\Omega} fv \ dxdy, 
\end{align*}
for all $v\in C_{BC\cup\sigma}^{\infty}(\overline{\Omega}).$
\end{definition}


 \section{Existence Results}\label{sec3}

The existence results presented in this Section depend on the Riesz representation theorem for space with negative norm \cite{lax} and the others traditional result of the functional analyse whole proofs can be found in \cite{brezis}.

To prove our main theorem we need to guarantee the existence of a solution for Problem (\ref{LPL}) with $\lambda =0$ and $\gamma =0;$ what we will do next.

\begin{theorem}\label{TE1}
    Let $\Omega$ be an admissible Tricomi domain for the operator $\mathcal{O}$. For every $f\in L^2(\Omega)$ there exists an unique weak solution $u\in W^1_{AC\cup\sigma}$ of the problem \eqref{LP}. Moreover, the solution operator
    \begin{align*}
        S_{AC\cup\sigma}: L^2(\Omega)\rightarrow W^1_{AC\cup\sigma},
    \end{align*}
    which assigns to $f\in L^2(\Omega)$ the unique weak solution $u\in W^1_{AC\cup\sigma}$ of the problem \eqref{LP} is linear and continuous. 
    \end{theorem}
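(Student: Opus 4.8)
The plan is to realize the weak solution via a functional-analytic duality argument built on the admissibility estimate \eqref{est3} (with $\lambda=0$), exactly in the spirit of Didenko's method. First I would reformulate the notion of weak solution: $u\in W^1_{AC\cup\sigma}$ solves \eqref{LP} weakly iff $(\mathcal{O}_{AC}u,v)_{L^2}=(f,v)_{L^2}$ for all $v\in C^\infty_{BC\cup\sigma}(\overline\Omega)$, and since $\mathcal{O}_{AC}u\in W^{-1}_{BC\cup\sigma}$ and $C^\infty_{BC\cup\sigma}(\overline\Omega)$ is dense in $W^1_{BC\cup\sigma}$, this is the identity $\mathcal{O}_{AC}u=f$ in $W^{-1}_{BC\cup\sigma}$, where on the right $f\in L^2(\Omega)\subset W^{-1}_{BC\cup\sigma}$ via the chain of inclusions recorded in the excerpt. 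So the theorem amounts to showing that $\mathcal{O}_{AC}:W^1_{AC\cup\sigma}\to W^{-1}_{BC\cup\sigma}$ is a bijection onto a subspace containing $L^2(\Omega)$, with bounded inverse on $L^2(\Omega)$.

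Next I would set up the dual pairing. By the self-adjointness computation carried out in Section \ref{sec2}, $(\mathcal{O}_{AC}u,v)_{L^2}=(u,\mathcal{O}_{BC}v)_{L^2}$ for $u\in W^1_{AC\cup\sigma}$, $v\in W^1_{BC\cup\sigma}$. Consider the range $R=\{\mathcal{O}_{BC}v:\ v\in W^1_{BC\cup\sigma}\}\subseteq W^{-1}_{AC\cup\sigma}$. On $R$ define the linear functional $\Lambda(\mathcal{O}_{BC}v)=(f,v)_{L^2}$; this is well defined because estimate \eqref{est4} (with $\lambda=0$) gives $\|v\|_{L^2(\Omega)}\le C_4\|\mathcal{O}_{BC}v\|_{W^{-1}_{AC\cup\sigma}}$, so $\mathcal{O}_{BC}v=0$ forces $v=0$, hence $(f,v)_{L^2}=0$; the same estimate gives $|\Lambda(\mathcal{O}_{BC}v)|\le\|f\|_{L^2}\|v\|_{L^2}\le C_4\|f\|_{L^2}\|\mathcal{O}_{BC}v\|_{W^{-1}_{AC\cup\sigma}}$, so $\Lambda$ is bounded on $R$ with norm $\le C_4\|f\|_{L^2}$. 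By Hahn–Banach extend $\Lambda$ to all of $W^{-1}_{AC\cup\sigma}$ with the same norm; by the Riesz representation theorem for spaces with negative norm \cite{lax}, which identifies $(W^{-1}_{AC\cup\sigma})^\ast$ with $W^1_{AC\cup\sigma}$, there is a unique $u\in W^1_{AC\cup\sigma}$ with $\langle\varphi,u\rangle=\Lambda(\varphi)$ for all $\varphi\in W^{-1}_{AC\cup\sigma}$ and $\|u\|_{W^1_{AC\cup\sigma}}\le C_4\|f\|_{L^2}$. Specializing to $\varphi=\mathcal{O}_{BC}v$ gives $(u,\mathcal{O}_{BC}v)_{L^2}=(f,v)_{L^2}$, i.e. $(\mathcal{O}_{AC}u,v)_{L^2}=(f,v)_{L^2}$ for all $v\in W^1_{BC\cup\sigma}$, so $u$ is a weak solution of \eqref{LP}. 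Uniqueness follows directly from the other admissibility estimate \eqref{est3}: if $u_1,u_2$ are weak solutions then $\mathcal{O}_{AC}(u_1-u_2)=0$ in $W^{-1}_{BC\cup\sigma}$, whence $\|u_1-u_2\|_{L^2(\Omega)}\le C_3\|\mathcal{O}_{AC}(u_1-u_2)\|_{W^{-1}_{BC\cup\sigma}}=0$, and since the $W^1_{AC\cup\sigma}$-norm controls the $L^2$-norm while $\mathcal{O}_{AC}$ of the difference vanishes, an argument analogous to Step 6 of the previous proof (bounding $\|u_1-u_2\|_{W^1_{AC\cup\sigma}}$ through the Cauchy problem operator $I$) upgrades this to $u_1=u_2$ in $W^1_{AC\cup\sigma}$.

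Finally, linearity of $S_{AC\cup\sigma}:f\mapsto u$ is immediate from uniqueness and linearity of the problem, and continuity is exactly the bound $\|u\|_{W^1_{AC\cup\sigma}}\le C_4\|f\|_{L^2(\Omega)}$ obtained above. I expect the main obstacle to be purely bookkeeping: making sure the two admissibility estimates are invoked in the correct roles — \eqref{est4} to make $\Lambda$ well defined and bounded (hence existence), and \eqref{est3} for uniqueness — and carefully justifying the identification $(W^{-1}_{AC\cup\sigma})^\ast\cong W^1_{AC\cup\sigma}$ together with the density of $C^\infty_{BC\cup\sigma}(\overline\Omega)$ needed to pass from test functions to all of $W^1_{BC\cup\sigma}$. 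No genuinely new estimate is required; everything rests on Theorem 2 (admissibility) and the negative-norm Riesz theorem.
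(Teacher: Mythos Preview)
Your proposal is correct and follows essentially the same route as the paper: define a linear functional on the range $\mathcal{O}_{BC}(W^1_{BC\cup\sigma})\subset W^{-1}_{AC\cup\sigma}$, use the admissibility estimate to see it is well defined and bounded, extend by Hahn--Banach, and represent it by some $u\in W^1_{AC\cup\sigma}$ via the Riesz theorem for negative-norm spaces; uniqueness comes from \eqref{est3}. In fact you are more explicit than the paper in two places: you spell out that it is \eqref{est4} which makes $\Lambda$ well defined and bounded (the paper simply says ``extending by continuity''), and you actually extract the operator bound $\|u\|_{W^1_{AC\cup\sigma}}\le C_4\|f\|_{L^2}$, which the paper asserts in the statement but never proves. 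Your only superfluous step is the ``upgrade'' at the end of the uniqueness argument: once $\|u_1-u_2\|_{L^2(\Omega)}=0$ you already have $u_1=u_2$ as elements of $W^1_{AC\cup\sigma}$, so no appeal to the operator $I$ is needed.
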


    \begin{proof}
\textbf{Uniqueness:} Let $u_1\in W^1_{AC\cup\sigma}$ and $u_2\in W^1_{AC\cup\sigma}$ two weak solutions of the problem \eqref{LP}. Suppose that $u_1\neq u_2$. By the definition, $<\mathcal{O}_{AC}u_1,v>_{AC}=(f,v)_{L^2}$ and $<\mathcal{O}_{AC}u_2,v>_{AC}=(f,v)_{L^2}$. So, $<\mathcal{O}_{AC}u_1-\mathcal{O}_{AC}u_2,v>_{AC}=0$. Thus, $\mathcal{O}_{AC}(u_1-u_2)=0$. Using \eqref{est3}, with $\lambda=0,$ gives $||u_1-u_2||=0$ and this is a contradiction with the suppose.

\vspace{0.3cm}

\textbf{Existence:} Define the linear functional
\begin{align*}
    \mathcal{L}_f: \mathcal{O}(C_{BC\cup\sigma}^{\infty}(\overline{\Omega}))\rightarrow \mathbb{R},
\end{align*}
as being $\mathcal{L}_f(\mathcal{O}v)=(v,f)_{L^2}$. Extending by continuity to $\mathcal{O}(W^1_{BC\cup\sigma})$ gives
\begin{align*}
    \mathcal{L}'_f: \mathcal{O}(W^1_{BC\cup\sigma})\rightarrow \mathbb{R},
\end{align*}
as being $\mathcal{L}'_f(\mathcal{O}v)=(v,f)_{L^2}$.

\vspace{0.3cm}

We know that, $\mathcal{O}(W^1_{BC\cup\sigma})\subset W^{-1}_{AC\cup\sigma}$ is a vector subspace. By the Hahn-Banach Theorem exist $\overline{\mathcal{L}}_f$ a extension of $\mathcal{L}'_f$. This is, $\overline{\mathcal{L}}_f|_{\mathcal{O}(W^1_{BC\cup\sigma})}=\mathcal{L}'_f$ and particularity $\overline{\mathcal{L}}_f|_{\mathcal{O}(C_{BC\cup\sigma}^{\infty}(\overline{\Omega}))}=\mathcal{L}_f$. Applying the Riesz Representation Theorem for space with negative norm \cite{lax}. Exist $u\in W^1_{AC\cup\sigma}$ such that 
\begin{align*}
    \overline{\mathcal{L}}_f(b)=<u,b>, \ \ \forall \ b\in W^{-1}_{AC\cup\sigma}
\end{align*}
particularity, for all $b\in \mathcal{O}(W^1_{BC\cup\sigma})\subset W^{-1}_{AC\cup\sigma}$. Hence for $b=\mathcal{O}v$ gives
\begin{align*}
    \mathcal{L}'_f(\mathcal{O}v)=(v,f)_{L^2}=<u,\mathcal{O}v>_{AC}.
\end{align*} 
This is equivalent to the condition \textit{ii)} of the definition \ref{Def4}, with $\lambda=0,$ as mentioned early. 
\end{proof}

In the next, we will proof the main theorem. 

\begin{theorem}
    Let $\Omega$ be an admissible Tricomi domain for the operator $\mathcal{O}-\lambda$, $f\in L^2(\Omega)$, $\gamma\in\mathbb{R}$ and $\lambda\leq0$. The problem \eqref{LPL} admits an unique weak solution in the sense of the previous definition and the solution operator 
    \begin{align*}
S^{\lambda,\gamma}_{AC\cup\sigma}: L^2(\Omega)\rightarrow W^1_{AC\cup\sigma}
    \end{align*}
    which assigns to $f\in L^2(\Omega)$ the unique weak solution $u\in W^1_{AC\cup\sigma}$ of the problem \eqref{LPL} is linear and continuous. 
\end{theorem}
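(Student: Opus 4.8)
The plan is to reduce the general problem \eqref{LPL} to the homogeneous-boundary, $\lambda=0$ case already handled in Theorem \ref{TE1}, by a two-stage reduction: first absorb the boundary datum $\gamma$, then absorb the $\lambda u$ term. First I would choose a fixed extension $G\in W^{1,2}(\Omega)$ with $G|_{AC\cup\sigma}=\gamma$ (assuming $\gamma$ is given by the trace of such a function, or, in the constant case treated here, simply a smooth function agreeing with $\gamma$ on $AC\cup\sigma$), and set $w:=u-G$. Then $u$ is a weak solution of \eqref{LPL} iff $w\in W^1_{AC\cup\sigma}$ satisfies $\mathcal{B}_\lambda(w,v)=\int_\Omega fv-\mathcal{B}_\lambda(G,v)$ for all $v\in C^\infty_{BC\cup\sigma}(\overline{\Omega})$; the right-hand side is a bounded linear functional on $W^1_{BC\cup\sigma}$, and since $\mathcal{O}$ is an isomorphism onto its image with the negative-norm estimate available, this functional can be represented (exactly as in the existence part of Theorem \ref{TE1}) by an element $\tilde f\in L^2(\Omega)$ paired via $\mathcal{O}$; so without loss of generality we may take $\gamma=0$.

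Next, with $\gamma=0$, I would treat the $\lambda u$ term perturbatively. For $\lambda\le 0$ the bilinear form $\mathcal{B}_\lambda(u,v)=\int_\Omega(y^{m_1}u_xv_x+x^{m_2}u_yv_y)-\lambda\int_\Omega uv$ differs from $\mathcal{B}_0$ by the term $-\lambda\int_\Omega uv$, which, since $-\lambda\ge 0$, is a nonnegative bilinear form that is bounded on $L^2(\Omega)\times L^2(\Omega)$ and hence on $W^1_{AC\cup\sigma}\times W^1_{BC\cup\sigma}$. The cleanest route is to repeat the duality/Riesz argument of Theorem \ref{TE1} directly for $\mathcal{O}-\lambda$: define $\mathcal{L}_f\big((\mathcal{O}-\lambda)v\big)=(v,f)_{L^2}$ on $(\mathcal{O}-\lambda)(C^\infty_{BC\cup\sigma}(\overline\Omega))$, extend by continuity to $(\mathcal{O}-\lambda)(W^1_{BC\cup\sigma})\subset W^{-1}_{AC\cup\sigma}$ — here the admissibility estimate \eqref{est4} for $\mathcal{O}-\lambda$ guarantees this map is well defined and bounded, because $\|v\|_{L^2}\le C_4\|(\mathcal{O}_{BC}-\lambda)v\|_{W^{-1}_{AC\cup\sigma}}$ controls $|(v,f)_{L^2}|$ by $C_4\|f\|_{L^2}\|(\mathcal{O}_{BC}-\lambda)v\|_{W^{-1}_{AC\cup\sigma}}$ — then apply Hahn–Banach and the Riesz representation theorem for spaces with negative norm to obtain $u\in W^1_{AC\cup\sigma}$ with $\langle u,(\mathcal{O}-\lambda)v\rangle_{AC}=(v,f)_{L^2}$ for all $v$, which is precisely Definition \ref{Def4}. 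Uniqueness follows from \eqref{est3} applied to the difference of two solutions, exactly as in Theorem \ref{TE1}.

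Finally, continuity and linearity of $S^{\lambda,\gamma}_{AC\cup\sigma}$: linearity in $f$ is immediate from uniqueness (the construction is linear in $f$, and for fixed $\gamma$ one should note the operator is affine in $(f,\gamma)$, linear when $\gamma$ is fixed, or state it for the linear part after subtracting the $\gamma$-contribution). For continuity, combining the admissibility estimate $\|u\|_{L^2}\le C_3\|(\mathcal{O}_{AC}-\lambda)u\|_{W^{-1}_{BC\cup\sigma}}=C_3\|f-\text{(boundary terms)}\|_{W^{-1}_{BC\cup\sigma}}$ with the a priori control of $\|\nabla v\|_{L^2}$ coming from the energy inequality of Theorem 2 — more precisely, one re-runs Step 6 of Theorem 2's proof with $\mathcal{O}-\lambda$ in place of $\mathcal{O}$ to get $\|u\|_{W^1_{AC\cup\sigma}}\le C\|f\|_{L^2}$ — yields the operator norm bound. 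The main obstacle I anticipate is not the $\lambda$-perturbation, which is soft once \eqref{est3}–\eqref{est4} hold for $\mathcal{O}-\lambda$, but rather making the boundary-datum reduction rigorous in the weighted setting: one must check that $G$ can be chosen in $W^{1,2}(\Omega)$ so that $\mathcal{B}_\lambda(G,\cdot)$ is genuinely bounded on $W^1_{BC\cup\sigma}$ and, more delicately, that it is represented by an $L^2$ function composed with $\mathcal{O}$ (so that the reduced problem still has the form \eqref{LP} with a new $L^2$ right-hand side) — if that representation is not available one instead keeps the functional abstract in $W^{-1}_{AC\cup\sigma}$ and applies Riesz directly, which is the safer path and the one I would take.
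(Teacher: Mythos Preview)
Your proposal is correct and follows essentially the same route as the paper: reduce to $\gamma=0$ by subtracting an extension, then rerun the Hahn--Banach/Riesz argument of Theorem~\ref{TE1} with $\mathcal{O}-\lambda$ in place of $\mathcal{O}$, using \eqref{est4} for well-definedness and \eqref{est3} for uniqueness. Your anticipated obstacle about the boundary-datum reduction dissolves here because $\gamma\in\mathbb{R}$ is a constant, so one may simply take $G\equiv\gamma$, whence $\mathcal{B}_\lambda(G,v)=-\lambda\gamma\int_\Omega v$ and the reduced right-hand side is the $L^2$ function $f+\lambda\gamma$ --- which is exactly the one-line reduction the paper performs in its Case~3.
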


\begin{proof}
    The proof will be done in three cases analyzing the similarity with the Theorem \ref{TE1}. 
    
\vspace{0.3cm}
\textbf{Case 1.} Consider $\gamma=0$ and $\lambda=0$. Here, we have the Theorem \ref{TE1}.

\vspace{0.3cm}    
\textbf{Case 2.} Consider $\gamma=0$ and $\lambda<0$. \textbf{Uniqueness:} Note that, $\Omega$ is an admissible Tricomi domain for the operator $\mathcal{O}-\lambda$. Let $u_1\in W^1_{AC\cup\sigma}$ and $u_2\in W^1_{AC\cup\sigma}$ two weak solutions of the problem \eqref{LPL}. Suppose that $u_1\neq u_2$. By definition $(\mathcal{O}_{AC}u_1-\lambda u_1,v)_{L^2}=(f,v)_{L^2}$ and $(\mathcal{O}_{AC}u_2-\lambda u_2,v)_{L^2}=(f,v)_{L^2}$ for all $v\in C_{BC\cup\sigma}^{\infty}(\overline{\Omega})$. So, $(\mathcal{O}_{AC}u_1-\mathcal{O}_{AC}u_2-\lambda u_1+\lambda u_2,v)_{L^2}=0$ for all $v\in C_{BC\cup\sigma}^{\infty}(\overline{\Omega})$. Thus, $\mathcal{O}_{AC}(u_1-u_2)-\lambda (u_1-u_2)=0$.

\vspace{0.3cm}

By \eqref{est3}, we have
\begin{align*}
||u_1-u_2||_{L^2}&\leq C_3||\mathcal{O}_{AC}(u_1-u_2)-\lambda(u_1-u_2)||_{W^{-1}_{AC\cup\sigma}}=0.
\end{align*}

Concluding the uniqueness of the weak solution.

\vspace{0.3cm}
\textbf{Existence:} The proof is analogous to the proof of Theorem \eqref{TE1} taking $\mathcal{O}-\lambda I$ in place of $\mathcal{O}$. 


\vspace{0.3cm}
\textbf{Case 3.} Consider $\gamma\neq0$ and $\lambda\leq0$. Here, take $u=w+\gamma$ where $w\in W^1_{AC\cup\sigma}$ is a weak solution of the next problem

\begin{equation*}
\begin{cases}
  \mathcal{O}(w)-\lambda w=f(x,y)+\lambda\gamma   &\mbox{in}\quad \Omega, \\
 \quad\quad \quad \quad w=0 &\mbox{on}\quad AC\cup\sigma\subseteq\partial\Omega,
\end{cases}
\end{equation*}
In fact, $g=f+\lambda\gamma\in L^2$ and $\lambda\leq0$. By the previous case, there is an unique weak solution $w\in W^1_{AC\cup\sigma}$ for this problem. Replacing, $w=u-\lambda$ in the anterior problem we get the result. 
\end{proof}




\section*{Acknowledgements}
C. Reyes Peña is doctoral student in the Graduate Program in Mathematics at Federal University of S\~{a}o Carlos (PPGM-UFSCar) (2019-2024). This study was financed in part by the Coordenação de Aperfeiçoamento de Pessoal de Nível Superior (CAPES) Finance Code 001. O H Miyagaki was supported in part by CNPq Nº 303256/2022-2 and FAPESP Nº 2020/16407-1.


\nolinenumbers


\end{document}